\title{Left-continuous random walk on $\IZ$ and the parity of its hitting times}
\author{Timo Vilkas
		\\\normalsize Lunds Universitet
}
\theoremstyle{break}
\newtheorem{theorem}{Theorem}[section]
\newtheorem{corollary}{Corollary}[section]
\newtheorem{lemma}{Lemma}[section]
\newtheorem{definition}{Definition}
\newtheorem*{remark}{Remark}
\newtheorem*{example}{Example}
\let\c@proposition\c@theorem
\let\c@lemma\c@theorem
\let\c@corollary\c@theorem
\newenvironment{proof}{\noindent{\sc Proof:}}{\vspace{-1em}~\hfill $\square$\vspace{2em}}
\newcommand\IN{\mathbb{N}}
\newcommand\IR{\mathbb{R}}
\newcommand\IZ{\mathbb{Z}}
\newcommand\IE{\mathbb{E}\,}
\newcommand\Prob{\mathbb{P}}
\renewcommand\epsilon{\varepsilon}
\renewcommand\phi{\varphi}
\definecolor{darkblue}{rgb}{0,0,.5}
\begin{document}
\newpage
\maketitle
\begin{abstract}
When it comes to random walk on the integers $\IZ$, the arguably first step of generalization beyond simple random
walk is the class of one-sidedly continuous random walk, where the stepsize in only one direction is bounded by 1. Moreover, the time until state 0 is hit by left-continuous random walk on $\IZ$ has a direct connection to the total progeny in branching processes. In this article, the probability of left-continuous random walk to be negative at an even (resp.\ odd) time is derived and used to determine the probability of nearly left-continuous random walk to eventually become negative.
\end{abstract}

\noindent
\textbf{Keywords:} 
Left-continuous random walk on $\IZ$, positive drift, skip-free to the left, hitting time, parity, separable distribution, branching process.

\section{Introduction}
In 1874, Francis Galton and Henry William Watson published a paper titled ``On the probability of the extinction of families'', laying the foundation of what is nowadays textbook material of basic courses in probability. Independently also
Ir\'en\'ee-Jules Bienaym\'e derived and analyzed the following branching process:
Consider a collection $(\xi_{i,j})_{i,j\in\IN_0}$ of independent copies of $\xi$, a non-negative, integer-valued random variable. The process starts with one progenitor who gives rise to $\xi_{0,1}$ children. Each individual in this
first generation then independently gives rise to $\xi_{1,j}$ individuals, $1\leq j \leq \xi_{0,1}$. This branching continues in all subsequent generations in the same manner.

The original question (Galton and Watson were concerned with the extinction of family names) addresses the probability of this branching process to die out, i.e.\ to determine $\rho=\Prob(Z_i=0\text{ for some }i\in\IN)$ where $Z_i$ defined via
\[Z_0=1\quad \text{and}\quad Z_i=\sum_{j=1}^{Z_{i-1}}\xi_{i,j}\quad\text{for }i\in\IN\]
denotes the number of individuals in the $i$th generation.
The fundamental \emph{extinction criterion} (cf.\ Chapter 1 in \cite{Athreya-Ney} for instance) states that for a non-deterministic distribution $\xi$, it holds $\rho<1$ iff $\IE\xi >1$. Another way to keep an account of the total progeny is to explore the family tree individual by individual and count the unexplored offspring, i.e.\ adding $\xi_{i,j}-1$ to the current count when individual $j$ in generation $i$ is processed. Given the properties of $\xi$ this random walk on the integers is what is commonly referred to as left-continuous. 

\begin{definition}
	Let $(S_n)_{n\in\IN_0}$ be a random walk on the integers. If its i.i.d.\ increments $X_n=S_n-S_{n-1}$
	are bounded from below by $-1$, i.e.\ $\Prob(X_n\geq -1)=1$, the walk is called {\em left-continuous} or {\em skip-free to the left}. Let us write $(p_k)_{k\geq -1}$ for the probability mass function and
	\[g(x)=\sum_{k=-1}^\infty p_k\,x^k=\frac{p_{-1}}{x}+p_0+p_1\,x+p_2\,x^2+\dots\]
	for the probability generating function of the corresponding increment distribution.
\end{definition}
Note that $g$ is not defined in $x=0$ in case $p_{-1}>0$ and that the above sum converges (absolutely) on $[-1,0)\cup(0,1]$ irrespectively of $(p_k)_{k\geq -1}$.

The connection between skip-free random walk and the progeny in the Galton-Watson-Bienaym\'e branching process was
established in the 1960s, cf.\ \cite{GWT}, and one of the reasons why the former received attention. Using this connection, we can rewrite the extinction probability as
\[\rho=\Prob(S_n =0\text{ for some }n\in\IN\,|\,S_n=1)=\Prob(S_n =-1\text{ for some }n\in\IN\,|\,S_n=0),\] where $(S_n)_{n\in\IN_0}$ is the left-continuous walk on $\IZ$ with increments distributed as $\xi-1$. According to the extinction criterion, $\rho$ is less than 1 iff $\IE(\xi-1)>0$, ignoring the trivial case $\xi\equiv 1$. This corresponds to the random walk having a positive drift.

In a different context, $S_n$ can be seen to describe the credit of a gambler after round $n$ in a game, where each round one token is at stake and the yield distributed as $\xi$. $\rho$ can then be interpreted as \emph{ruin probability}.
\vspace{1em}

Left-continuous random walk is a prominent example in standard references on the matter (e.g.\ \cite{Spitzer}) as it maintains a substantial part of the tractability of simple random walk without being quite that simplistic. While \cite{hitting} settles the question to what extent knowing $\Prob(S_n =0)$ for all $n\in\IN$ determines the increment distribution of left-continuous random walk on $\IZ$, \cite{skipfree2} focusses on the limiting behavior of the walk on $\IN_0$ where state 0 is chosen to be absorbing. With simple martingale methods, the probability of left-continuous random walk to visit a given level below the starting state can be calculated (cf.\ Thm.\ 5.16 in \cite{Durrett}). We chose to include a different approach, see Lemma \ref{rhorec} below. The hitting time theorem for left-continuous random walk, which states that given $\{S_n=0\}$ the probability that $n$ is the first time the walk started at $k$ visits $0$ equals $\frac{k}{n}$, irrespectively of the increment distribution, was reproved in \cite{hitting2} using basic yet elaborate combinatorial arguments. In the article by Brown, Pek\"oz and Ross \cite{skipfree} mean and variance of the minimum of
left-continuous random walk on $\IZ$ are derived using the fact that the walk with positive drift conditioned to return to state $0$ (or below) behaves like an unconditioned walk with a different increment distribution, as widely known and used for simple random walk.

This paper is concerned with the two events of left-continuous random walk on $\IZ$ to eventually be negative at an \emph{even} resp.\ \emph{odd} time and with deriving their probabilities. In the short next section, we warm up by proving that random walk with positive drift stays non-negative with positive probability. Section \ref{sec3} contains the definition of all relevant hitting times and related probabilities as well as useful relations between the latter and the probability generating function of the increment distribution. In the last section, the central Theorem \ref{even} as well as some more or less immediate corollaries are proved. Finally, these results about parity of hitting times for left-continuous random walk on $\IZ$ are used to determine the probability to become negative even for random walk that is not quite left-continuous, cf.\ Corollary \ref{asf}. 

\section{Random walk with positive drift}\label{sec1}

Let us start with a short proof of the well-known fact that random walk on $\IZ$ with positive drift has a positive probability to never hit $\IZ\setminus\IN_0=\{\dots,-2,-1\}$, if started at $0$.

\begin{lemma}\label{drift}
	Let $(X_n)_{n\in\mathbb{N}}$ be an i.i.d.\ sequence with $\IE(X_1)>0$.
	Then the infimum of the random walk $(S_n)_{n\in\IN_0}$, defined by $S_0=0,\ S_{n+1}=S_n+X_{n+1}$, is attained at $S_0=0$ with positive probability.
\end{lemma}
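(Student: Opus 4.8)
The plan is to prove the equivalent statement that $\Prob(S_n\geq 0\text{ for all }n\in\IN_0)>0$. Since $S_0=0$ is one of the values taken by the walk, we always have $\inf_n S_n\leq 0$, so the infimum is attained at $S_0$ precisely when the walk never drops below $0$. The first ingredient is the strong law of large numbers: because $\IE(X_1)>0$ (well-defined, possibly $+\infty$), we get $S_n/n\to\IE(X_1)>0$ almost surely and hence $S_n\to+\infty$. In particular the global infimum $M:=\inf_{n\in\IN_0}S_n$ is finite almost surely and is attained at some finite random time.

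The second and central ingredient is the renewal structure of the strict descending ladder epochs. Let $T:=\inf\{n\geq 1:S_n<0\}$ denote the first time the walk is strictly below its starting point, and set $q:=\Prob(T<\infty)$. Applying the strong Markov property at $T$ together with the spatial homogeneity of the walk, each time a new running minimum is reached the walk restarts with the same law, so the total number $N$ of strict descending ladder points satisfies $\Prob(N\geq k)=q^k$ for every $k\in\IN_0$; that is, $N$ is geometrically distributed. Consequently $N<\infty$ almost surely if and only if $q<1$.

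Combining the two ingredients finishes the argument: since $S_n\to+\infty$ almost surely, only finitely many running minima can occur, i.e.\ $N<\infty$ almost surely, which by the previous paragraph forces $q<1$. Therefore $\Prob(S_n\geq 0\text{ for all }n)=\Prob(T=\infty)=1-q>0$, which is exactly the claim.

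I expect the delicate point to be the rigorous justification of the geometric law in the second step: making precise that ``restarting at each new minimum'' is a legitimate application of the strong Markov property that yields genuinely i.i.d.\ (possibly defective) inter-ladder increments, and correspondingly that $N<\infty$ almost surely is equivalent to $q<1$. A secondary technical caveat is the use of the strong law when $\IE(X_1)=+\infty$, which still delivers $S_n\to+\infty$ through the truncation version of the law of large numbers.
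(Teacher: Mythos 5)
Your proof is correct, and its key mechanism is genuinely different from the paper's. Both arguments open with the strong law of large numbers, but the paper then avoids ladder epochs entirely: from $\Prob(S_n\geq 1\text{ for all }n\geq n_0)>0$ for some finite $n_0$ it deduces that the global minimum is attained, with positive probability, at one of the finitely many deterministic times $n_1<n_0$; fixing such an $n_1$, the inclusion $\{S_{n_1}=\min_{n\in\IN_0}S_n\}\subseteq\{\sum_{k=n_1+1}^{n}X_k\geq 0\text{ for all }n>n_1\}$ together with the fact that $(X_n)_{n>n_1}$ has the same law as $(X_n)_{n\in\IN}$ gives the claim. That argument uses only stationarity of the i.i.d.\ sequence under a \emph{deterministic} time shift, so it sidesteps exactly the point you flag as delicate: no strong Markov property at random times and no i.i.d.\ cycle decomposition needs to be formalized. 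Your route is the classical fluctuation-theoretic one, and it is rigorous once the strong Markov property is invoked at the stopping times $T_j=\inf\{n>T_{j-1}:\,S_n<S_{T_{j-1}}\}$ (given $\{T_j<\infty\}$, the post-$T_j$ increments are again i.i.d.\ with the original law and independent of the past, whence $\Prob(N\geq k)=q^k$ by induction); your observation that $S_n\to+\infty$ forces only finitely many ladder epochs is also sound, since every ladder point lies strictly below $0$. What your approach buys beyond the paper's is quantitative structure: the geometric law of the number of strict descending ladder points and the exact identity $\Prob(S_n\geq 0\text{ for all }n)=1-q$, which connect directly to renewal and Wiener--Hopf theory; your explicit remark covering $\IE(X_1)=+\infty$ via the truncated law of large numbers is also a point the paper's appeal to $\lim_{n\to\infty}S_n/n=\IE(X_1)$ passes over silently. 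The paper's proof, in exchange, is shorter and more elementary, needing nothing beyond the SLLN and a finite union bound.
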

\begin{proof}
	By the law of large numbers it holds almost surely that \[\lim_{n\to\infty}\frac{S_n}{n} =\lim_{n\to\infty}\frac1n \sum_{k=1}^n X_k=\mathbb{E}(X_1)>0.\] Hence, there exists $n_0\in\mathbb{N}$ such that $\mathbb{P}\big(S_n \geq 1\ \text{ for all } n\geq n_0\big)>0$.
	Since $n_0$ is finite, there must be some $0\leq n_1< n_0$ such that
	\begin{equation*}
		\mathbb{P}\Big(S_{n_1} =\min\big\{S_n,\; n\in\mathbb{N}_0\big\}\Big)>0.
	\end{equation*}
	
	As a consequence, the event $\{\sum_{k=n_1+1}^n X_k\geq 0$ for all $n>n_1\}$ must have strictly positive probability. Due to the fact that $(X_n)_{n>n_1}$ and $(X_n)_{n\in\mathbb{N}}$ have the same distribution, the claim follows.
\end{proof}

Note that the same proof in fact also works for more general walks on $\IR$ with positive drift, which is why the restriction of $X_1$ to be integer-valued was omitted in the statement of the lemma. For the remainder of the paper, we will however only be concerned with random walks on the integers, i.e.\ increment distributions with $\Prob(X_1\in\IZ)=1$.

\section{Hitting times and their parity}\label{sec3}

In this section, we want to determine the probabilities of different hitting times to be finite (and odd) for a left-continuous random walk $(S_n)_{n\in\IN_0}$, defined by $S_0=0,\ S_{n+1}=S_n+X_{n+1}$. To make things non-trivial, we consider walks with positive drift that are not monotone, i.e.\ $\IE(X_1)>0$ and $p_{-1}>0$. To this end, let
\begin{align*}T_{-1}&=\inf\{n,\;S_n=-1\}\quad\text{and}\\
	T_0^+&=\inf\{n>0,\;S_n=0\}
\end{align*}
denote the hitting time of state $-1$ and the return time to state $0$ respectively. Further we define the
following probabilities:
\begin{gather}\label{probs}
	\begin{aligned}
		\rho&=\Prob(T_{-1}<\infty)\\
		\rho_\mathrm{odd}&=\Prob(T_{-1}\text{ is odd}\,|\,T_{-1}<\infty)\\
		\sigma&=\Prob(S_n>0\text{ for all }n\in\IN)\\
		\tau&=\Prob(T_0^+<\infty,\ T_0^+<T_{-1})\\
		\tau_\mathrm{odd}&=\Prob(T_0^+\text{ is odd}\,|\,T_0^+<\infty,\,T_0^+<T_{-1}).
	\end{aligned}
\end{gather}
In words, $\rho$ denotes the probability, that state $-1$ will be visited, $\rho_\mathrm{odd}$ the conditional probability that given the walk reaches state $-1$, it will do so for the first time after an odd number of steps; $\sigma$ is the probability that the walk stays strictly positive after its start in state $0$,  and $\tau$ is the probability that $0$ is revisited, before $-1$ is visited in case the latter happens. Finally, $\tau_\mathrm{odd}$ is the conditional probability that, given a return to $0$ before visiting $-1$, this positive excursion will take an odd number of steps. For ease of notation we write $\tau=\Prob(T_0^+<T_{-1})$ etc.\ as the strict inequality forces $T_0^+$ to be finite.

For left-continuous walks, one can easily derive an equation for the probability $\rho$ that the walk visits $-1$ after start at $0$, cf.\ Lemma 2 in \cite{skipfree}:

\begin{lemma}\label{rhorec}
	Let $(S_n)_{n\in\IN_0}$ be a left-continuous, non-monotonic random walk with positive drift, i.e.\ $\Prob(X_1\geq -1)=1>\Prob(X_1\geq 0)$ and $\IE(X_1)>0$.
	Then the probability $\rho$, that state $-1$ is visited after start at $0$, is given by the unique solution to $g(x)=1$ in the open interval $(0,1)$.
\end{lemma}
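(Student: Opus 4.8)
The plan is to combine a first-step (first-passage) decomposition of the walk with an elementary analytic study of $g$ on $(0,1)$. For the decomposition I would condition on the first increment $X_1$ of the walk started at $0$. If $X_1=-1$, state $-1$ is reached immediately; if $X_1=0$, the walk is back at $0$ and the remaining probability to reach $-1$ is again $\rho$; if $X_1=k\geq 1$, the walk sits at $k$. The crucial point is left-continuity: since downward steps have size exactly $1$, the walk cannot reach $-1$ from $k$ without passing successively through $k-1,k-2,\dots,0$. By the strong Markov property and spatial homogeneity, each of these $k+1$ one-level descents occurs independently with probability $\rho$, so the probability of reaching $-1$ from $k$ equals $\rho^{k+1}$. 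Summing over the first step yields
\[ \rho = p_{-1} + p_0\,\rho + \sum_{k=1}^\infty p_k\,\rho^{k+1} = \rho\,g(\rho), \]
the last equality being just the definition of $g$ multiplied by $\rho$. Since $p_{-1}>0$ forces $\rho\geq p_{-1}>0$, I may divide by $\rho$ to conclude $g(\rho)=1$.

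Next I would show that $g(x)=1$ has a unique root in $(0,1)$. On $(0,\infty)$ the function $g$ is strictly convex, because the term $p_{-1}/x$ is strictly convex (this is where $p_{-1}>0$ enters) while every remaining term $p_k\,x^k$ is convex; consequently $g$ attains any given value at most twice. One root of $g(x)=1$ is $x=1$, since $g(1)=\sum_{k\geq -1}p_k=1$, and here $g'(1)=\sum_k k\,p_k=\IE(X_1)>0$, so $g$ is strictly increasing at $1$. As $x\to 0^+$ the term $p_{-1}/x$ forces $g(x)\to+\infty>1$, so by the intermediate value theorem $g$ takes the value $1$ somewhere in $(0,1)$; strict convexity (at most two roots in total, one of them $x=1$) makes this interior root unique.

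It remains to pin down that $\rho$ is precisely this interior root rather than the endpoint $x=1$. The positive-drift hypothesis, via Lemma \ref{drift}, guarantees that with positive probability the walk started at $0$ attains its infimum at time $0$ and hence never visits $-1$; thus $\rho<1$. Together with $\rho>0$ and $g(\rho)=1$ from the first-step analysis, this places $\rho$ in the open interval $(0,1)$, where the root is unique, and the proof is complete.

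The step I expect to be the main obstacle is the first-passage decomposition, namely justifying rigorously that reaching $-1$ from level $k$ factorizes as $\rho^{k+1}$. This is exactly where left-continuity is indispensable: without skip-free steps the walk could overshoot the intermediate levels and the clean product structure would break down. Making it precise requires a careful appeal to the strong Markov property at the successive first-passage times of the levels $k-1,\dots,0,-1$, together with translation invariance identifying each single-level descent with the original event of probability $\rho$.
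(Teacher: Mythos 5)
Your proposal is correct and follows essentially the same route as the paper: a first-step decomposition using $\Prob(T_{-1}<\infty\,|\,S_0=k)=\rho^{k+1}$ (via left-continuity and the strong Markov property) to obtain $\rho=\rho\,g(\rho)$, then Lemma \ref{drift} together with $\rho\geq p_{-1}>0$ to place $\rho$ in $(0,1)$, and strict convexity of $g$ with $g(1)=1$ for uniqueness. You merely spell out details the paper leaves implicit, such as the intermediate value argument using $g(x)\to\infty$ as $x\to 0^+$ and $g'(1)=\IE(X_1)>0$.
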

\begin{proof}
	By the definition of $\rho$ and the strong Markov property it immediately follows that $\Prob(T_{-1}<\infty\,|\,S_0=k)=\rho^{k+1}$, for all $k\in\IN_0$. Consequently, conditioning on the first step gives
	\begin{equation}\label{eq:rhorec}
		\rho=\sum_{k=-1}^\infty p_k\,\rho^{k+1}=\rho\cdot g(\rho).
	\end{equation}
	Lemma \ref{drift} (together with $\rho\geq p_{-1}>0)$ implies $0<\rho<1$ and the claim follows from the strict convexity of $g$ on $(0,1)$ and $g(1)=1$.
\end{proof}

Observe that in the setting of the above lemma, from \eqref{eq:rhorec} one can in fact deduce $\rho>p_{-1}$, as
$p_k>0$ for some $k>0$ since $0<\IE(X_1)<\sum_{k>0}p_k\cdot k$.
Next, we want to address the parity of the hitting times $T_{-1}$ and $T_0^+$.

\begin{lemma}\label{calc}
	Let $(S_n)_{n\in\IN_0}$ be a left-continuous, non-monotonic random walk with positive drift, started in $S_0=0$. 
	Then the following hold:
	\begin{enumerate}[(a)]
		\item $p_{-1}+\tau+\sigma=1$
		\item $\sigma=p_{-1}\cdot \frac{1-\rho}{\rho}$ and $\tau=1-\frac{p_{-1}}{\rho}$
	\end{enumerate}
\end{lemma}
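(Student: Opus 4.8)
The plan is to prove (a) by decomposing trajectory space according to which of the three events ``stay positive forever'', ``return to $0$ first'', or ``reach $-1$ first'' occurs, and then to prove (b) by a renewal argument over successive returns to $0$, reading off $\rho$ as a geometric series.

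For part (a), I would first argue that the three events
\[\sigma=\{S_n>0\text{ for all }n\geq 1\},\qquad \tau=\{T_0^+<T_{-1}\},\qquad B=\{T_{-1}<T_0^+\}\]
partition the sample space up to a null set. When both $T_0^+$ and $T_{-1}$ are finite they cannot coincide (the walk occupies a single site at each time), so exactly one strict inequality holds; when both are infinite the walk never revisits $0$ nor reaches $-1$, and by left-continuity a walk started at $0$ that avoids $-1$ can never drop below $0$, hence stays $\geq 1$ after the first step, which is precisely $\sigma$. The key observation is then that $B=\{X_1=-1\}$ up to null sets: if $X_1=-1$ then $T_{-1}=1<T_0^+$, whereas if $X_1\geq 0$ the walk sits at a non-negative site after one step, and by left-continuity it can only reach $-1$ after first passing through $0$, forcing $T_0^+\leq T_{-1}$. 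Thus $\Prob(B)=p_{-1}$, and adding the three probabilities yields $p_{-1}+\tau+\sigma=1$.

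For part (b), I would invoke the strong Markov property at successive returns to $0$. Each visit to $0$ starts an independent excursion with exactly the three outcomes of (a): return to $0$ (probability $\tau$), reach $-1$ before returning (probability $p_{-1}$), or escape and stay positive forever (probability $\sigma$). Since $p_{-1}>0$, part (a) gives $\tau\leq 1-p_{-1}<1$, so almost surely only finitely many excursions return to $0$, and the walk visits $-1$ exactly when the first non-returning excursion is of the ``reach $-1$'' type. Summing the geometric series gives
\[\rho=\Prob(T_{-1}<\infty)=\sum_{k\geq 0}\tau^k\,p_{-1}=\frac{p_{-1}}{1-\tau}=\frac{p_{-1}}{p_{-1}+\sigma},\]
where the last step uses (a). Solving for $\sigma$ yields $\sigma=p_{-1}(1-\rho)/\rho$, and substituting back into $p_{-1}+\tau+\sigma=1$ gives $\tau=1-p_{-1}/\rho$.

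The main obstacle, and the only point where the nature of the walk genuinely enters, is justifying the ``reach $-1$ only via $0$'' step: both the partition in (a) and the clean three-outcome excursion decomposition in (b) hinge on left-continuity, which forbids the walk from jumping below $0$ without first touching $0$ and then stepping to $-1$. Once this is secured, the rest is bookkeeping with the strong Markov property and a single geometric sum; the hypothesis $p_{-1}>0$ is what makes $\tau<1$, so that the excursion count terminates and the series converges, while the positive drift (through Lemma \ref{drift} and Lemma \ref{rhorec}) ensures $0<\rho<1$, making the resulting expressions for $\sigma$ and $\tau$ bona fide probabilities.
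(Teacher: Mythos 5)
Your proof is correct, and while your part (a) coincides with the paper's (the same case differentiation based on left-continuity, with the key identification $\{T_{-1}<T_0^+\}=\{X_1=-1\}$ up to null sets), your part (b) takes a genuinely different route. The paper conditions on the first step and computes both quantities through the probability generating function, namely $\sigma=\sum_{k\geq1}p_k\,(1-\rho^k)$ and $\tau=\sum_{k\geq0}p_k\,\rho^k=g(\rho)-p_{-1}/\rho$, and then invokes $g(\rho)=1$ from Lemma \ref{rhorec}; you instead apply the strong Markov property at successive return times to $0$ and sum a geometric series, obtaining the identity $\rho\,(1-\tau)=p_{-1}$ directly, after which part (a) yields both formulas. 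Your route is more elementary: it never touches $g$, and it uses positive drift (via Lemmas \ref{drift} and \ref{rhorec}) only through the bound $0<\rho<1$ at the very end; indeed your identities hold for any non-monotonic left-continuous walk, degenerating gracefully to $\sigma=0$ and $\tau=1-p_{-1}$ when $\rho=1$. It also makes the probabilistic content of $\tau=1-p_{-1}/\rho$ transparent, since hitting $-1$ means a geometric number of return excursions terminates with a down-step. What the paper's computation buys in exchange is economy and continuity of method: given Lemma \ref{rhorec} it is a two-line calculation, and the same first-step conditioning with $\Prob(T_{-1}<\infty\,|\,S_0=k)=\rho^{k+1}$ is precisely the machinery reused (with Bayes' theorem) in the proof of Theorem \ref{skipfree}, so the lemma's proof doubles as a warm-up for that argument. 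One cosmetic point: you write $\sigma$ and $\tau$ both for events and for their probabilities; this is harmless but worth cleaning up.
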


\begin{proof}
	Part (a) follows immediately from left-continuity of the walk, by case differentiation whether the first step is down or state $0$ revisited if it is not. As to part (b), conditioning on the first step and using translation invariance of the walk in the form $\Prob(T_0^+<\infty\,|\,S_0=k)=\Prob(T_{-1}<\infty\,|\,S_0=k-1)=\rho^{k}$, for all $k>0$, gives:
	\[\sigma=\sum_{k=1}^\infty p_k\,\big(1-\rho^k\big)=\sum_{k=0}^\infty p_k\,\big(1-\rho^k\big)=\big(1-p_{-1}\big)-\Big(g(\rho)-\tfrac{p_{-1}}{\rho}\Big),\]
	from which the first claim follows (using $g(\rho)=1$, see Lemma \ref{rhorec} above). The second one is now either a simple consequence of part (a) or derived in the same vein:
	\[\tau=\sum_{k=0}^\infty p_k\,\rho^k=g(\rho)-\tfrac{p_{-1}}{\rho}=1-\tfrac{p_{-1}}{\rho}.\]
\end{proof}

\begin{theorem}\label{skipfree}
	Let $(S_n)_{n\in\IN_0}$ be a left-continuous, non-monotonic random walk with positive drift, started in $S_0=0$ and
	$\rho_\mathrm{odd},\tau_\mathrm{odd}$ be the corresponding conditional probabilities defined in \eqref{probs}. Then the following relations hold:
	\begin{enumerate}[(a)]
		\item $\rho_\mathrm{odd}>\frac12$ and $g\big(\rho\,(1-2\rho_\mathrm{odd})\big)=-1$
		\item $\tau_\mathrm{odd}=\frac{p_{-1}\,(1-\rho_\mathrm{odd})}{\rho\tau\,(2\rho_\mathrm{odd}-1)}$
	\end{enumerate}
\end{theorem}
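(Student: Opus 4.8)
The plan is to encode the parity of the hitting times through \emph{signed} generating functions evaluated at $s=-1$. Define
\[F(s)=\IE\big[s^{T_{-1}};\,T_{-1}<\infty\big]=\sum_{n\geq 1}\Prob(T_{-1}=n)\,s^n,\]
a genuine (sub-probability) power series converging absolutely on $[-1,1]$ since $F(1)=\rho\leq 1$. Splitting according to parity gives $F(-1)=\rho(1-\rho_\mathrm{odd})-\rho\,\rho_\mathrm{odd}=\rho(1-2\rho_\mathrm{odd})=:\hat\rho$, so part (a) amounts to showing $\hat\rho<0$ and $g(\hat\rho)=-1$. Note also $|\hat\rho|\leq\rho<1$, so $g(\hat\rho)$ lies in the convergence region and is well defined.

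First I would derive a functional equation for $F$ by conditioning on the first step and exploiting left-continuity. To descend from a state $k\geq 0$ down to $-1$ the walk must cross every intermediate level, so by the strong Markov property and translation invariance this descent time is a sum of $k+1$ i.i.d.\ copies of $T_{-1}$, contributing $F(s)^{k+1}$; together with the factor $s$ for the first step this yields
\[F(s)=s\sum_{k=-1}^\infty p_k\,F(s)^{k+1}=s\,F(s)\,g(F(s)),\]
which remains valid at $s=-1$ because $\sum_k p_k|\hat\rho|^{k+1}\leq\sum_k p_k\rho^{k+1}=\rho<\infty$. Evaluating at $s=-1$ gives $\hat\rho=-\hat\rho\,g(\hat\rho)$. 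The key (and essentially only nontrivial) step is the non-degeneracy $\hat\rho\neq 0$: writing the relation as $\hat\rho=-\sum_{k\geq -1}p_k\hat\rho^{k+1}$, the assumption $\hat\rho=0$ would force $p_{-1}=0$, contradicting non-monotonicity. Hence $\hat\rho\neq 0$ and $g(\hat\rho)=-1$; since $g(x)=p_{-1}/x+\sum_{k\geq 0}p_kx^k>0$ for every $x>0$, the root $\hat\rho$ must be negative, giving $1-2\rho_\mathrm{odd}<0$, i.e.\ $\rho_\mathrm{odd}>\tfrac12$. This settles (a).

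For (b) I would run the same scheme for the restricted return time. Set $G(s)=\IE[s^{T_0^+};\,T_0^+<T_{-1}]$, so that $G(1)=\tau$ and, by the parity split, $\tau_\mathrm{odd}=(\tau-G(-1))/(2\tau)$. Conditioning on the first step: the step to $-1$ contributes nothing, the step to $0$ contributes $p_0 s$, and from a state $k\geq 1$ left-continuity forces the walk to reach $0$ before $-1$, so that excursion contributes $p_k\,s\,F(s)^k$. Summing gives
\[G(s)=s\sum_{k=0}^\infty p_k F(s)^k=s\Big(g(F(s))-\tfrac{p_{-1}}{F(s)}\Big).\]
Evaluating at $s=-1$ and inserting $g(\hat\rho)=-1$ from part (a) yields $G(-1)=1+p_{-1}/\hat\rho$. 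It then remains to substitute $\hat\rho=\rho(1-2\rho_\mathrm{odd})$ and $\tau=1-p_{-1}/\rho$ from Lemma \ref{calc} into $\tau_\mathrm{odd}=(\tau-G(-1))/(2\tau)$ and simplify; the routine algebra collapses to $\tau_\mathrm{odd}=\frac{p_{-1}(1-\rho_\mathrm{odd})}{\rho\tau(2\rho_\mathrm{odd}-1)}$, as claimed. The main obstacle throughout is precisely the non-degeneracy $\hat\rho\neq 0$ in part (a); once it is in place, everything else is bookkeeping with the two functional equations and Lemma \ref{calc}.
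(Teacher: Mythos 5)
Your proposal is correct, and it arrives at both parts by a route that differs in packaging, though not in underlying substance, from the paper's. The structural core is identical: both arguments use left-continuity plus the strong Markov property to write the descent time from level $k$ as a sum of $k+1$ i.i.d.\ copies of $T_{-1}(0)$, and both extract parity by an evaluation at $-1$ --- your $F(-1)$ encodes exactly the paper's binomial identity, since $(1-2p)^n=\IE\big[(-1)^Y\big]$ for $Y\sim\mathrm{Bin}(n,p)$. Where you genuinely diverge: (i) you work with the unconditional restricted transforms $F(s)=\IE[s^{T_{-1}};\,T_{-1}<\infty]$ and $G(s)=\IE[s^{T_0^+};\,T_0^+<T_{-1}]$, so the Bayes tilting $\Prob(X_1=k\,|\,T_{-1}<\infty)=p_k\,\rho^k$ that the paper computes explicitly is absorbed automatically into the factor $F(s)^{k+1}$; and (ii) you prove the two claims of part (a) in the opposite order. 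The paper first establishes $\rho_\mathrm{odd}>\tfrac12$ by a contradiction bound inside its recursion and only then divides to obtain $g\big(\rho\,(1-2\rho_\mathrm{odd})\big)=-1$; you first rule out $\hat\rho=0$ via $p_{-1}>0$ (correctly identified as the one nontrivial step --- and since the series $\sum_{k\geq-1}p_k\,\hat\rho^{\,k+1}$ is defined even at $\hat\rho=0$, this also repairs the small presentational wrinkle that you declared $g(\hat\rho)$ well defined before knowing $\hat\rho\neq0$), then get $g(\hat\rho)=-1$, and finally read off $\hat\rho<0$ from positivity of $g$ on $(0,1]$, which yields $\rho_\mathrm{odd}>\tfrac12$. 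Your ordering is arguably the cleaner logic, and your framing makes a pleasing parallel with Lemma \ref{rhorec} explicit: evaluating the skip-free functional equation $F(s)=s\,F(s)\,g(F(s))$ at $s=1$ gives $g(\rho)=1$, while $s=-1$ gives $g(\hat\rho)=-1$. What each approach buys: the paper's version is self-contained at the level of elementary conditional probabilities and the binomial parity formula, with no transforms; yours leverages standard generating-function machinery, handles the interchange issues at $s=-1$ by the correct domination $\sum_k p_k|\hat\rho|^{k+1}\leq\rho$, and reduces part (b) --- via $\tau_\mathrm{odd}=(\tau-G(-1))/(2\tau)$ and Lemma \ref{calc} --- to the same bookkeeping the paper performs.
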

\begin{proof}
	To prove the first part, let us begin with the elementary calculation of the probability that a binomial variable
	takes on an even value: Let $Y\sim\mathrm{Bin}(n,p)$, then 
	\begin{align*}\Prob(Y\text{ is even})&=\sum_{k=0}^{\lfloor\frac n2\rfloor}\binom{n}{2k}p^{2k}\,(1-p)^{n-2k}\\
		&=\frac12\Bigg(\sum_{k=0}^n\binom{n}{k}p^{k}\,(1-p)^{n-k}+\sum_{k=0}^n\binom{n}{k}(-p)^k\,(1-p)^{n-k}\Bigg)\\
		&=\tfrac12\,\big(1+(1-2p)^n\big).
	\end{align*}
	Assume for now that the random walk starts in $S_0=k\geq0$ and let $T_{-1}(k)$ stand for $T_{-1}\,|\,S_0=k$. Due to its left-continuity, the time until state $-1$ is visited can then be written as sum of the (possibly infinite) times \[T_{i\to i-1}=\inf\{n>0,\;S_n=i-1\}-\inf\{n>0,\;S_n=i\},\quad\text{for }0\leq i\leq k,\] it takes to reach level $i-1$ after the first time level $i$ was visited, i.e.
	\[T_{-1}(k)=\sum_{i=0}^k T_{i\to i-1}\quad\text{and}\quad\big\{T_{-1}(k)<\infty\big\}=\bigcap_{i=0}^k \{T_{i\to i-1}<\infty\},\]
	see Figure \ref{posexc} for an illustration. By the strong Markov property and translation invariance of the walk, the random variables $T_{k\to k-1},\dots,T_{0\to-1}$ are independent and distributed as $T_{-1}(0)$, the time to hit state $-1$ after a start in $S_0=0$.
	\begin{figure}[h]
		\includegraphics[width=\textwidth]{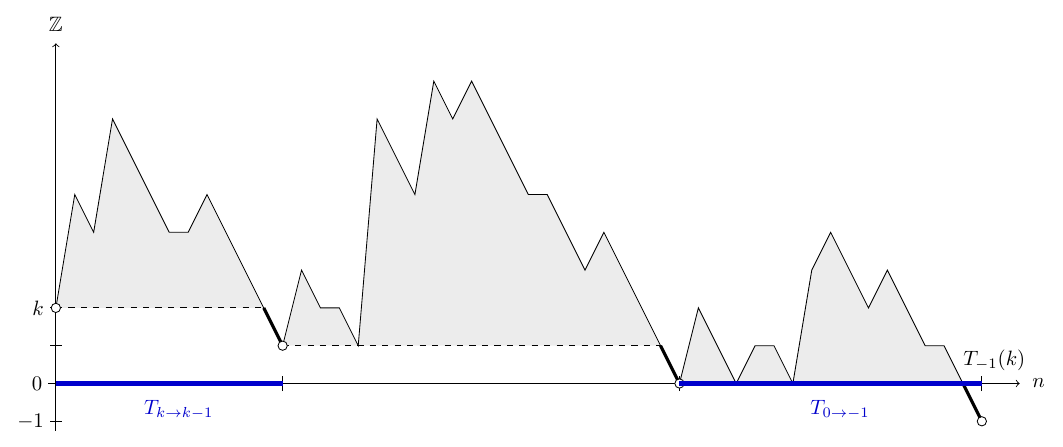}
		\caption{Starting in $S_0=k\geq0$, the total time $T_{-1}(k)$ until state $-1$ is hit is the sum of $k+1$ independent copies of $T_{-1}(0)$.\label{posexc}}
	\end{figure}
	
	Due to product structure of the set $\{T_{-1}(k)<\infty\}$ it further holds that $T_{-1}(k)$ conditioned to be finite is distributed as the sum of $k+1$ i.i.d.\ copies of $T_{-1}(0)$ conditioned to be finite. At this point, it is crucial to observe that $T_{-1}(k)$ is even, if and only if an even number of these $k+1$ copies are odd, and that given $\{T_{-1}(k)<\infty\}$ this number is binomially distributed with parameters $k+1$ and $\rho_\mathrm{odd}$ by (conditional) independence.\vspace{1em}
	
	Returning to the walk starting in $S_0=0$, we can condition on the first step once again. In doing so, two things are important to note: On the one hand, given $X_1=k\geq0$, the subsequent walk behaves like one started in $S_0=k$, besides the fact that this first step flips the parity of $T_{-1}$. On the other hand, conditioning on $\{T_{-1}<\infty\}$ also changes the distribution of the first increment $X_1$. Applying Bayes' theorem, we find for all $k\geq -1$:
	\[\Prob(X_1=k\,|\,T_{-1}<\infty)=\frac{\Prob(X_1=k)}{\Prob(T_{-1}<\infty)}\cdot\Prob(T_{-1}<\infty\,|\,X_1=k)
	=\frac{p_k}{\rho}\rho^{k+1}=p_k\,\rho^k.\]
	With the above reasoning, we established the following recursion for $\rho_\mathrm{odd}$:
	\begin{align*}
		\rho_\mathrm{odd}&=\sum_{k=-1}^\infty \Prob(X_1=k\,|\,T_{-1}<\infty)\cdot\Prob(T_{-1}\text{ is even}\,|\,S_0=k,\ T_{-1}<\infty)\\
		&=\sum_{k=-1}^\infty p_k\,\rho^k\cdot\frac12\,\Big(1+(1-2\rho_\mathrm{odd})^{k+1}\Big)\\
		&=\frac12 \Big(g(\rho)+(1-2\rho_\mathrm{odd})\cdot g\big(\rho\,(1-2\rho_\mathrm{odd})\big)\Big).
	\end{align*}
	Here we tacitly used the fact that $\rho\,(1-2\rho_\mathrm{odd})\neq0$, i.e.\ that $g$ is defined for this argument.
	Assuming $\rho_\mathrm{odd}\leq\frac12$ in the second equation, however, would lead to
	\[\rho_\mathrm{odd}\geq \frac{p_{-1}}{\rho}+\sum_{k=0}^\infty p_k\,\rho^k\cdot\frac12\,(1+0)=\frac{p_{-1}}{2\rho}+\frac{g(\rho)}{2}>\frac12,\]
	i.e.\ a contradiction. With help of a small algebraic manipulation (using $g(\rho)=1$ and $(1-2\rho_\mathrm{odd})\neq0$) we
	finally arrive at $1+g\big(\rho\,(1-2\rho_\mathrm{odd})\big)=0$.
	
	Very much like for $\rho_\mathrm{odd}$ we can calculate $\tau_\mathrm{odd}$ by conditioning on the first step, however, this time with $\Prob(X_1=-1\,|\,T_0^+<T_{-1})=0$ and
	\[\Prob(X_1=k\,|\,T_0^+<T_{-1})=\Prob(T_0^+<\infty\,|\,X_1=k)\cdot\frac{\Prob(X_1=k)}{\Prob(T_0^+<T_{-1})}\\=\rho^{k}\cdot\frac{p_k}{\tau},\]
	for all $k\geq0$. The first equation here follows (besides using Bayes) from the fact that for $k\geq 0$ it holds $\{X_1=k,\ T_0^+<T_{-1}\}=\{X_1=k,\ T_0^+<\infty\}$, by left-continuity of the walk.
	
	Using translation invariance of the walk, we arrive at
	\begin{align*}\tau_\mathrm{odd}&=\Prob(T_0^+\text{ is odd}\,|\,T_0^+<T_{-1})\\
		&=\sum_{k=0}^\infty \Prob(X_1=k\,|\,T_0^+<T_{-1})\cdot \Prob(T_0^+\text{ is odd}\,|\,T_0^+<\infty,\, X_1=k)\\
		&=\sum_{k=0}^\infty \rho^{k}\cdot\frac{p_k}{\tau}\cdot \Prob(T_{-1}\text{ is even}\,|\,T_{-1}<\infty,\, S_0=k-1)\\
		&=\sum_{k=0}^\infty \rho^{k}\cdot\frac{p_k}{\tau}\cdot\frac12\,\Big(1+(1-2\rho_\mathrm{odd})^{k}\Big)\\
		&=\frac{1}{2\tau}\Bigg(g(\rho)-\frac{p_{-1}}{\rho}+g\Big(\rho\,(1-2\rho_\mathrm{odd})\Big)-\frac{p_{-1}}{\rho\,(1-2\rho_\mathrm{odd})}\Bigg)\\
		&=\frac{p_{-1}\,(1-\rho_\mathrm{odd})}{\rho\tau\,(2\rho_\mathrm{odd}-1)}.
	\end{align*}
	Note that $\rho\in(p_{-1},1)$, thus $\tau>0$, and $\rho_\mathrm{odd}>\frac12$, which gives $\rho\tau\,(2\rho_\mathrm{odd}-1)>0$. Besides that, both $g(\rho)=1$ and part (a) were used in the last step.
\end{proof}

\begin{example}
	Consider increments of the form $X_n=Y_n-1$, where $(Y_n)_{n\in\IN}$ is an i.i.d.\ sequence of random variables that are Poisson distributed with parameter $\lambda>1$, i.e.\ \[p_k:=\Prob(X_1=k)=\mathrm{e}^{-\lambda}\frac{\lambda^{k+1}}{(k+1)!},\text{ for all }k\geq -1.\] Then $p_{-1}=\mathrm{e}^{-\lambda}$ and $g(x)=\frac1x\,\mathrm{e}^{\lambda(x-1)}$.
	
	Choosing a numerical value for the parameter $\lambda$ one can calculate the relevant (conditional) probabilities using Lemma \ref{calc} and Theorem \ref{skipfree}.
	With $\lambda=\frac32$ for instance, the corresponding computations give (approximately):
	$\rho=0.417188$, $\sigma=0.311713,$ $\tau=0.465157$ as well as $\rho_\mathrm{odd}=0.706513$ and $\tau_\mathrm{odd}
	=0.817032.$
\end{example}

\section{Using parity to deal with nearly left-continuous walks on $\IZ$ with separable increments}\label{separable}

Given $(S_n)_{n\in\IN_0}$ as above, we can use Lemma \ref{calc} and Theorem \ref{skipfree} to define and analyze an auxiliary Markov chain, designed to determine the probability of the event $E=\{S_{2n}<0 \textup{ for some }n\in\IN\}$, i.e.\ that the walk is negative at an even time.

\begin{theorem}\label{even}
	Let $(S_n)_{n\in\IN_0}$ be a left-continuous, non-monotonic random walk on $\IZ$ with positive drift and
	$\sigma$, $\rho$ and $\tau$ as well as $\rho_\mathrm{odd}$ and $\tau_\mathrm{odd}$ be the corresponding (conditional) probabilities defined in \eqref{probs}. Then it holds
	\begin{equation}\label{evenprob}
		\Prob(E\,|\,S_0=0)= \rho\,\bigg(1-\frac{\sigma\cdot\rho_\mathrm{odd}}{1-\tau\cdot(1-\tau_\mathrm{odd})}\bigg).
	\end{equation}
\end{theorem}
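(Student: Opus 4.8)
The plan is to write $\Prob(E\,|\,S_0=0)=\rho-\Prob(\bar E,\,T_{-1}<\infty)$, where $\bar E$ is the complement of $E$, and then to identify the subtracted probability with the second term in \eqref{evenprob}. The first reduction is immediate: being left-continuous, the walk can only become negative by first visiting $-1$, so $E\subseteq\{T_{-1}<\infty\}$ and consequently $\Prob(E)=\Prob(T_{-1}<\infty)-\Prob(\bar E,\,T_{-1}<\infty)=\rho-\Prob(\bar E,\,T_{-1}<\infty)$.

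Next I would analyze $\Prob(\bar E,\,T_{-1}<\infty)$ by conditioning on the parity of $T_{-1}$. On $\{T_{-1}<\infty\}$ the walk sits at $-1$ at time $T_{-1}$; were $T_{-1}$ even, this would already place the walk at a negative value at an even time, so $\bar E$ forces $T_{-1}$ to be odd. The probability of $\{T_{-1}<\infty,\ T_{-1}\text{ odd}\}$ equals $\rho\,\rho_\mathrm{odd}$ by definition. Conditionally on this event, the strong Markov property and translation invariance turn the continuation into a fresh left-continuous walk $V$ started at $0$, namely $V_m=S_{T_{-1}+m}+1$: the requirement that $S$ stay $\geq 0$ at all even times beyond $T_{-1}$ becomes, since $T_{-1}$ is odd, the requirement that $V_m\geq 1$ at every odd time $m$. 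Writing $C$ for the probability of this last event (which does not depend on the odd value of $T_{-1}$), I obtain $\Prob(\bar E,\,T_{-1}<\infty)=\rho\,\rho_\mathrm{odd}\,C$.

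The heart of the argument is the computation of $C$, equivalently of $b'=1-C=\Prob(V_m\leq 0\text{ for some odd }m)$, which I would get from a one-step renewal recursion based at the visits of $V$ to $0$ (this is the promised auxiliary Markov chain). Consider the first time $\theta\geq 1$ with $V_\theta\leq 0$. By left-continuity there are exactly three possibilities, precisely the three cases behind Lemma \ref{calc}(a): with probability $\sigma$ the walk stays positive forever and $\theta=\infty$; with probability $\tau$ it returns to $0$ first, so $V_\theta=0$ and $\theta=T_0^+$; and with the remaining probability $p_{-1}=1-\sigma-\tau$ it drops to $-1$ first, which can only occur on the initial step, so $V_\theta=-1$ with $\theta=1$ odd. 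In the last case the event $\{V\leq 0$ at an odd time$\}$ occurs; in the middle case it occurs iff $T_0^+$ is odd (conditional probability $\tau_\mathrm{odd}$), whereas if $T_0^+$ is even the walk is back at $0$ at an even time and the strong Markov property restarts the identical problem. This gives
\begin{equation*}
b'=p_{-1}+\tau\,\tau_\mathrm{odd}+\tau\,(1-\tau_\mathrm{odd})\,b',
\end{equation*}
so that $b'=\dfrac{p_{-1}+\tau\,\tau_\mathrm{odd}}{1-\tau\,(1-\tau_\mathrm{odd})}$ and, using $p_{-1}+\tau+\sigma=1$ from Lemma \ref{calc}, $C=1-b'=\dfrac{\sigma}{1-\tau\,(1-\tau_\mathrm{odd})}$. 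Substituting $\Prob(\bar E,\,T_{-1}<\infty)=\rho\,\rho_\mathrm{odd}\,C$ into $\Prob(E)=\rho-\Prob(\bar E,\,T_{-1}<\infty)$ then yields exactly \eqref{evenprob}.

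I expect the main obstacle to be the bookkeeping of parities across the strong-Markov restarts: one must verify that $\bar E$ genuinely forces $T_{-1}$ to be odd, that shifting by the odd time $T_{-1}$ converts ``even times of $S$'' into ``odd times of $V$'', and that the restart in the recursion returns to $0$ at an even time so that the very same probability $b'$ reappears. Once these parity alignments are settled, the three-way split together with the identity $p_{-1}+\tau+\sigma=1$ makes the remaining algebra routine.
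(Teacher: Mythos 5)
Your proposal is correct and takes essentially the same approach as the paper: your conditioning on $\{T_{-1}<\infty,\ T_{-1}\text{ odd}\}$ (probability $\rho\,\rho_\mathrm{odd}$) followed by the three-way split into $p_{-1}$, $\tau\,\tau_\mathrm{odd}$ and $\tau\,(1-\tau_\mathrm{odd})$ reproduces exactly the paper's auxiliary absorbing Markov chain of Figure \ref{MC}, with the only difference that you solve the absorption problem through the scalar renewal equation $b'=p_{-1}+\tau\,\tau_\mathrm{odd}+\tau\,(1-\tau_\mathrm{odd})\,b'$ and complementation rather than by inverting $\mathbf{I}-Q$. Your explicit parity bookkeeping (that $\bar{E}$ forces $T_{-1}$ to be odd, that the shift by the odd time $T_{-1}$ turns even times of $S$ into odd times of $V$, and that the restart happens at an even return to $0$) is precisely the justification the paper leaves implicit, and your algebra, using $p_{-1}+\tau+\sigma=1$ from Lemma \ref{calc}(a), yields \eqref{evenprob} correctly.
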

\begin{proof}
	As already mentioned, to calculate the probability of the event $E$, that $(S_n)_{n\in\IN_0}$ visits $\IZ\setminus\IN_0=\{\dots,-2,-1\}$ at an even time, the results from Section \ref{sec3} come in useful.
	Let us define a Markov chain, by disecting the trajectory of $(S_n)_{n\in\IN_0}$ into the starting point $S_0=0$ and segments leading
	to visits in $\IZ\setminus\IN_0$, keeping track of the parity of time. The states ``no further visit to $-1$'' and
	``$S_n< 0$ for $n$ even'' are considered to be absorbing. However, the state ``$S_n=-1$ for $n$ odd'' is transient, as it does not give an answer to the question whether $E$ eventually occurs or not.
	Representing the relevant segments by nodes as depicted below in Figure \ref{MC}, we get a finite absorbing Markov chain with transition probabilities as shown, cf.\ \eqref{probs}.
	
	\begin{figure}[h]
		\vspace{-0.8cm}\begin{center}
			\includegraphics[width=0.8\textwidth]{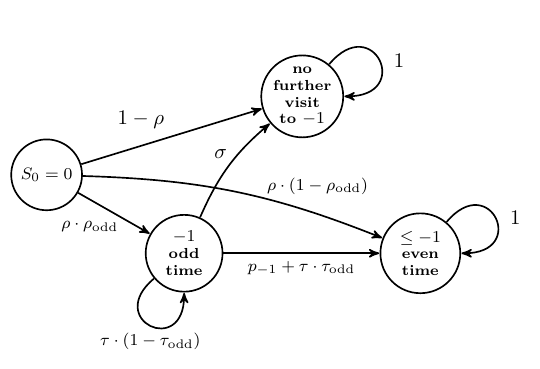}\end{center}\vspace{-0.5cm}
		\caption{Simple MC tracking the parity of time when $(\tilde{S}_n)_{n\in\IN_0}$ becomes negative.\label{MC}}
	\end{figure}
	The event that the smallest $n$ for which $S_n=-1$ is (finite and) odd, for instance, corresponds to $\{T_{-1}\text{ is odd}\}$, which has probability  $\rho\cdot\rho_\mathrm{odd}$. Given that we are in state $-1$ at an odd time, the probability that the trajectory takes a positive excursion of even length (to be in $-1$ again at an odd time next) is $\tau$ (for taking a positive excursion from $-1$ to $-1$) times $1-\tau_\mathrm{odd}$, the conditional probability that, given the walk returns to $-1$ before visiting $-2$, this excursion consists of an even number of steps. Note that the Markov property is inherited from the random walk $(S_n)_{n\in\IN_0}$ due to the fact that the two transient states of the chain correspond to states (with time stamp) of the walk. 
	
	Ordering the states from left to right, we arrive at the following transition matrix:
	\[P=\begin{bmatrix} 0 & \rho\cdot\rho_\mathrm{odd}& 1-\rho &\rho\,(1-\rho_\mathrm{odd})\\
		0 & \tau\,(1-\tau_\mathrm{odd}) &\sigma & p_{-1}+\tau\cdot\tau_\mathrm{odd}\\
		0&0&1&0\\
		0&0&0&1
	\end{bmatrix}.\]
	Given that the chain starts in the first state ``$S_0=0$'', the probability to get absorbed in the fourth one ``$S_n\leq -1$ for $n$ even''
	is given by element $b_{12}$ of the matrix $B=NR\in\IR^{2\times2}$, where $N=(\mathbf{I}-Q)^{-1}$ is the so-called fundamental matrix, $\mathbf{I}$ refers to the $2\times 2$ identity matrix and $Q,R$ are the submatrices the canonical form of $P$ consists of:
	\[P=\begin{bmatrix} Q & R\\
		\mathbf{0} & \mathbf{I}\\
	\end{bmatrix},\text{ i.e.}\ \
	Q=\begin{bmatrix} 0 & \rho\cdot\rho_\mathrm{odd}\\
		0 & \tau\cdot(1-\tau_\mathrm{odd})\\
	\end{bmatrix},\ \ 
	R=\begin{bmatrix} 1-\rho &\rho\cdot(1-\rho_\mathrm{odd})\\
		\sigma & p_{-1}+\tau\cdot\tau_\mathrm{odd}\\
	\end{bmatrix}.\]
	The claim \eqref{evenprob} now follows by some basic matrix calculations. For details see Chapter 11.2 in \cite{Markov}.
\end{proof}

\begin{corollary}\label{odd}
	Letting $O=\{S_{2n-1}<0 \textup{ for some }n\in\IN\}$ denote the event that the walk $(S_n)_{n\in\IN_0}$, defined as above, is negative at an odd time instead, we can conclude
	\[\Prob(O\,|\,S_0=0)= \rho\,\bigg(1-\frac{\sigma\cdot(1-\rho_\mathrm{odd})}{1-\tau\cdot(1-\tau_\mathrm{odd})}\bigg).\]
	and
	\[\Prob(E\cap O\,|\,S_0=0)= \rho\,\bigg(1-\frac{\sigma}{1-\tau\cdot(1-\tau_\mathrm{odd})}\bigg)=\rho\,\frac{p_{-1}+\tau\cdot\tau_\mathrm{odd}}{1-\tau\cdot(1-\tau_\mathrm{odd})}.\]
\end{corollary}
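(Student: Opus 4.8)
The plan is to handle $O$ by re-running the absorbing-Markov-chain argument of Theorem \ref{even} with the two time-parities interchanged, and then to extract $E\cap O$ from $\Prob(E)$, $\Prob(O)$ together with a simple union identity, so that no third chain is needed.

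First I would set up, for $\Prob(O)$, exactly the same four-node chain as in the proof of Theorem \ref{even}, changing only the interpretation of the two ``$-1$''-states: the absorbing success state is now ``$S_n<0$ for $n$ odd'' and the transient state is ``$S_n=-1$ for $n$ even''. Since a first passage to $-1$ at an odd time already realises $O$, whereas a first passage at an even time merely lands in the transient state, the first row of the transition matrix has its entries $\rho\cdot\rho_\mathrm{odd}$ and $\rho\,(1-\rho_\mathrm{odd})$ interchanged relative to $P$; the row governing the transient state is untouched, because the excursion bookkeeping — stay at the same parity with probability $\tau\,(1-\tau_\mathrm{odd})$, flip parity with probability $p_{-1}+\tau\cdot\tau_\mathrm{odd}$, and leave for good with probability $\sigma$ — does not depend on which parity the transient state carries. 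Carrying out the same $B=NR$ computation and reading off $b_{12}$ then yields the stated formula, namely the Theorem \ref{even} expression with $\rho_\mathrm{odd}$ replaced by $1-\rho_\mathrm{odd}$.

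For the joint event I would avoid a fresh chain and instead observe that, by left-continuity, the walk first becomes negative exactly at time $T_{-1}$, at which instant the time stamp is either even or odd; hence $E\cup O=\{T_{-1}<\infty\}$ and $\Prob(E\cup O\,|\,S_0=0)=\rho$. Inclusion--exclusion gives $\Prob(E\cap O)=\Prob(E)+\Prob(O)-\rho$, and since the two bracketed correction terms carry the complementary weights $\rho_\mathrm{odd}$ and $1-\rho_\mathrm{odd}$, their sum collapses to leave $\rho\,\bigl(1-\frac{\sigma}{1-\tau\,(1-\tau_\mathrm{odd})}\bigr)$. The second displayed form is then a one-line rewrite using $p_{-1}+\tau+\sigma=1$ from Lemma \ref{calc}(a), which turns $1-\tau\,(1-\tau_\mathrm{odd})-\sigma$ into $p_{-1}+\tau\cdot\tau_\mathrm{odd}$.

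I do not expect a genuine obstacle, as Theorem \ref{even} does the real work; the only point demanding care is the parity bookkeeping in the first row of the new transition matrix — being sure that a first passage to $-1$ at an odd time is itself an occurrence of $O$ and therefore belongs in the success column rather than the transient one.
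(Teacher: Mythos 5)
Your proposal is correct and takes essentially the same route as the paper: the paper likewise obtains $\Prob(O\,|\,S_0=0)$ by rerunning the absorbing chain of Theorem \ref{even} with the roles of the two parity states swapped (so $\rho_\mathrm{odd}$ and $1-\rho_\mathrm{odd}$ trade places in the first row while the transient-state row is unchanged) and reading off $b_{12}$ of $B=(\mathbf{I}-Q)^{-1}R$, and it derives $\Prob(E\cap O\,|\,S_0=0)$ exactly as you do, from $\Prob(E\cup O\,|\,S_0=0)=\Prob(T_{-1}<\infty\,|\,S_0=0)=\rho$ via inclusion--exclusion together with Lemma \ref{calc}(a). Your explicit justification that $E\cup O=\{T_{-1}<\infty\}$ by left-continuity (the walk first goes negative precisely at $T_{-1}$) only makes precise a step the paper uses implicitly.
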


\begin{proof}
	To determine $\Prob(O\,|\,S_0=0)$ we can adapt the approach above by essentially
	letting the states ``$S_n=-1$ for $n$ odd'' and ``$S_n\leq-1$ for $n$ even'' change their roles, cf.\ Figure \ref{MC1}.
	\begin{figure}[h!]
		\vspace{-1cm}\begin{center}
			\includegraphics[width=0.8\textwidth]{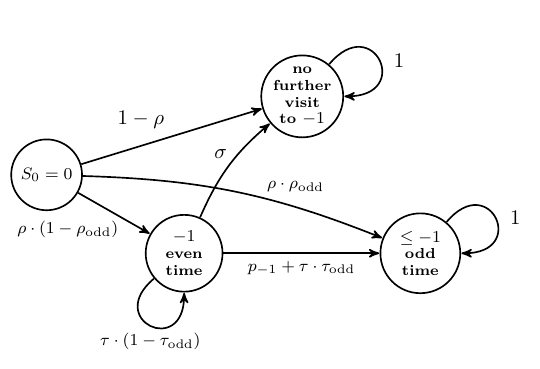}\end{center}\vspace{-0.5cm}
		\caption{Slightly altered Markov chain construction\label{MC1}}
	\end{figure}
	
	\noindent
	Given the slightly changed transition matrix ($\rho_\mathrm{odd}$ and $1-\rho_\mathrm{odd}$ get swapped) and
	\[	Q=\begin{bmatrix} 0 & \rho\cdot(1-\rho_\mathrm{odd})\\
		0 & \tau\cdot(1-\tau_\mathrm{odd})\\
	\end{bmatrix},\ \ 
	R=\begin{bmatrix} 1-\rho &\rho\cdot\rho_\mathrm{odd}\\
		\sigma & p_{-1}+\tau\cdot\tau_\mathrm{odd}\\
	\end{bmatrix}\]
	following the above reasoning verbatim, element $b_{12}$ of the matrix $B=(\mathbf{I}-Q)^{-1}R$ corresponds to the probability we were looking for. Using 
	\begin{align*}\rho&=\Prob(T_{-1}<\infty\,|\,S_0=0)=\Prob(E\cup O\,|\,S_0=0)\\
		&=\Prob(E\,|\,S_0=0)+\Prob(O\,|\,S_0=0)-\Prob(E\cap O\,|\,S_0=0)
	\end{align*}
	and Lemma \ref{calc}(a), we arrive at the second claim.
\end{proof}

In contrast to the probability of hitting $\IZ\setminus\IN_0$, i.e.\ the event $\{T_{-1}<\infty\}$, which simply becomes $\rho^{k+1}$ for a left-continuous random walk on $\IZ$
started in general $k\in\IN_0$, the probability of the event $E=\{S_{2n}<0 \textup{ for some }n\in\IN\}$ given a start at $k>0$ is not just a power of $\Prob(E\,|\,S_0=0)$. However, the construction above can be used to determine this probability even for $S_0>0$:

\begin{corollary}\label{generalstart}
	For general $k\in\IN_0$ it holds
	\[\Prob(E\,|\,S_0=k)= \rho^{k+1}\,\bigg(1-\frac{\sigma}{2}\cdot\frac{1-(1-2\rho_\mathrm{odd})^{k+1}}{1-\tau\cdot(1-\tau_\mathrm{odd})}\bigg),\]
	\[\Prob(O\,|\,S_0=k)= \rho^{k+1}\,\bigg(1-\frac{\sigma}{2}\cdot\frac{1+(1-2\rho_\mathrm{odd})^{k+1}}{1-\tau\cdot(1-\tau_\mathrm{odd})}\bigg)\quad\text{and}\]
	\[\Prob(E\cap O\,|\,S_0=k)= \rho^{k+1}\,\bigg(1-\frac{\sigma}{1-\tau\cdot(1-\tau_\mathrm{odd})}\bigg).\]
\end{corollary}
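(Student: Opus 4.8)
The plan is to reuse the absorbing Markov chains from Theorem \ref{even} and Corollary \ref{odd}, observing that moving the starting point from $0$ to $k$ alters only the \emph{initial} transition, while the dynamics out of the transient state (the walk sitting at $-1$ with a fixed time-parity) are unchanged by translation invariance and the strong Markov property. First I would recall from the proof of Theorem \ref{skipfree} that, started at $k$, the walk reaches $-1$ with probability $\rho^{k+1}$, that $T_{-1}(k)$ decomposes into $k+1$ i.i.d.\ copies of $T_{-1}(0)$, and that conditioned on $\{T_{-1}(k)<\infty\}$ the number of odd copies is $\mathrm{Bin}(k+1,\rho_\mathrm{odd})$. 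The binomial even-value computation from that proof then yields
\[\Prob\big(T_{-1}(k)\text{ even}\,\big|\,T_{-1}(k)<\infty\big)=\tfrac12\big(1+(1-2\rho_\mathrm{odd})^{k+1}\big),\]
with complementary odd probability $\tfrac12\big(1-(1-2\rho_\mathrm{odd})^{k+1}\big)$.

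Next I would isolate the absorption probability that is independent of $k$. In either chain, once the walk first hits $-1$ we enter the transient node, from which the probability of eventually being absorbed in the ``$E$ occurs'' (resp.\ ``$O$ occurs'') node is
\[h=\frac{p_{-1}+\tau\,\tau_\mathrm{odd}}{1-\tau\,(1-\tau_\mathrm{odd})},\qquad\text{so that}\qquad 1-h=\frac{\sigma}{1-\tau\,(1-\tau_\mathrm{odd})}=:s,\]
the last identity using Lemma \ref{calc}(a). This $h$ is obtained by summing the geometric series for the self-loop $\tau\,(1-\tau_\mathrm{odd})$ on the transient state, exactly as in the fundamental-matrix computation of Theorem \ref{even}.

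Then I would assemble the three probabilities. Writing $w=(1-2\rho_\mathrm{odd})^{k+1}$, started at $k$ the chain moves from the initial node directly into the ``$E$ occurs'' node with probability $\rho^{k+1}\tfrac12(1+w)$ and into the transient node with probability $\rho^{k+1}\tfrac12(1-w)$, whence
\[\Prob(E\,|\,S_0=k)=\rho^{k+1}\Big(\tfrac{1+w}{2}+\tfrac{1-w}{2}\,h\Big)=\rho^{k+1}\Big(1-\tfrac{1-w}{2}\,s\Big),\]
the second equality substituting $h=1-s$; this is the first formula. For $O$ the even and odd first-passage parities are swapped in the initial transition (as in Corollary \ref{odd}) while $h$ is unchanged, giving $\Prob(O\,|\,S_0=k)=\rho^{k+1}\big(1-\tfrac{1+w}{2}\,s\big)$. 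Finally, since $E\cup O=\{T_{-1}(k)<\infty\}$ has probability $\rho^{k+1}$, inclusion--exclusion yields $\Prob(E\cap O\,|\,S_0=k)=\Prob(E\,|\,S_0=k)+\Prob(O\,|\,S_0=k)-\rho^{k+1}=\rho^{k+1}(1-s)$, the third formula.

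The step I expect to be the main obstacle — indeed the only non-mechanical point — is justifying rigorously that the starting level $k$ affects \emph{only} the first row of the transition matrix. The cleanest argument is that each transient state corresponds to the walk occupying a definite site of $\IZ$ (namely $-1$) carrying a definite time-parity, so the strong Markov property forces the post-hitting evolution, and hence $h$, to coincide with the $k=0$ case; the entire $k$-dependence is then carried solely by the binomial parity split of $T_{-1}(k)$ derived above.
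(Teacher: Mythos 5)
Your proposal is correct and takes essentially the same route as the paper: the same four-state absorbing Markov chain in which the starting level $k$ modifies only the initial transition via the $\mathrm{Bin}(k+1,\rho_\mathrm{odd})$ parity split of $T_{-1}(k)$ into $k+1$ i.i.d.\ copies of $T_{-1}(0)$, the same parity swap for $O$, and the same inclusion-exclusion step (with $E\cup O=\{T_{-1}(k)<\infty\}$) for $E\cap O$. Summing the geometric series for the self-loop by hand instead of invoking the fundamental matrix $B=(\mathbf{I}-Q)^{-1}R$ is only a cosmetic difference.
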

\begin{proof}
	Using the same construction as above, we end up with the Markov chain and corresponding transition probabilities as depicted in Figure \ref{MC2}.
	
	\begin{figure}[h!]
		\vspace{-0.8cm}\begin{center}
			\includegraphics[width=\textwidth]{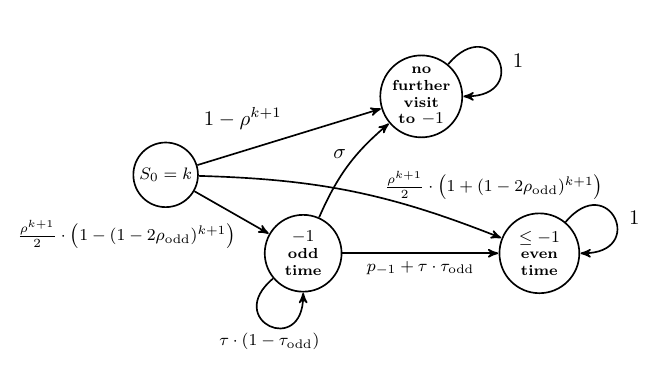}\end{center}\vspace{-0.5cm}
		\caption{A different starting value $S_0$ only changes the transition probabilities of the Markov chain from this state.\label{MC2}}
	\end{figure}
	
	\noindent
	This leads to
	\[P=\begin{bmatrix} 0 & \frac{\rho^{k+1}}{2}\big(1-(1-2\rho_\mathrm{odd})^{k+1}\big)& 1-\rho^{k+1} &\frac{\rho^{k+1}}{2}\big(1+(1-2\rho_\mathrm{odd})^{k+1}\big)\\
		0 & \tau(1-\tau_\mathrm{odd}) &\sigma & p_{-1}+\tau\cdot\tau_\mathrm{odd}\\
		0&0&1&0\\
		0&0&0&1
	\end{bmatrix},\]
	hence \[
	Q=\begin{bmatrix} 0 &\rho^{k+1}\,(1-q)\\
		0 & \tau(1-\tau_\mathrm{odd})\\
	\end{bmatrix}\quad\text{and}\quad
	R=\begin{bmatrix} 1-\rho^{k+1} &\rho^{k+1}\,q\\
		\sigma & p_{-1}+\tau\cdot\tau_\mathrm{odd}\\
	\end{bmatrix},\]
	where $q:=\tfrac12\,(1+(1-2\rho_\mathrm{odd})^{k+1})$ denotes the probability of a random variable, which is binomially distributed with parameters $k+1$ and $\rho_\mathrm{odd}$, to be even.
	The second element in the first row of matrix $B=(\mathbf{I}-Q)^{-1}R$ finally gives the probability of interest as before.
	
	Changing parities translates to swapping $q$ and $1-q$ in the construction and calculations above, which gives the second claim. Finally, the probability for $E\cap O$ follows immediately by the inclusion-exclusion principle.
\end{proof}

To extend the analysis of random walk on $\IZ$ to increment distributions concentrated on $\{-2,-1\}\cup\IN_0$, rather than $\{-1\}\cup\IN_0$ as for left-continuous walks, we consider the special case in which the increment distribution arises as the convolution of two i.i.d.\ integer-valued random variables:
\begin{definition}
	Let $X$ be an integer-valued random variable. Let its distribution be called {\em separable}, if there exist independent and identically distributed, integer-valued random variables $Y_1$ and $Y_2$, such that 
	\begin{equation*}\label{sep}
		X\stackrel{d}{=}Y_1+Y_2.
	\end{equation*}
\end{definition}

The first observation to make is that the restriction of $Y_i$ to be integer-valued is a non-trivial one. Since the sum
$Y_1+Y_2$ is integer-valued and the summands i.i.d., there are two possible cases: either $\Prob(Y_i\in\IZ)=1$ or $\Prob(Y_i\in\IZ+\frac12)=1$. To give a concrete example, let $Z\sim\mathrm{Bin}(2,\frac12)$. Then the integer-valued random variable $X=Z-1$, with probability mass function $p_{-1}=p_1=\frac14,\ p_0=\frac12$ can be written as the sum
of two independent $\mathrm{unif}\{-\frac12,\frac12\}$ random variables but not as the sum of two i.i.d.\ integer-valued
random variables.

Further, it is worth mentioning that this property (besides the restriction to integer-valued summands) is a lot weaker than the one of being {\em infinitely divisible} as the latter requires a decomposition into not only two, but arbitrarily many summands, i.e.\ that for any $n\in\IN$ there exist i.i.d.\ $Y_1,\dots,Y_n$ such that $X$ has the same distribution as the sum  $Y_1+\ldots+Y_n$.

\begin{corollary}\label{asf}
	For fixed $k\in\IN_0$, consider the random walk $(S_n)_{n\in\IN_0}$ on $\IZ$, defined by $S_0=k,\ S_{n+1}=S_n+X_{n+1}$, where $(X_n)_{n\in\mathbb{N}}$ is an i.i.d.\ sequence of integer-valued random variables with $\IE(X_1)>0$ and $\Prob(X_1\geq-2)=1$. If the marginal increment distribution $\mathcal{L}(X_1)$ is separable, i.e.\ there exist i.i.d.\ random variables $Y_1,Y_2$ such that
	\[X_1\stackrel{d}{=}Y_1+Y_2,\] the probability of the random walk $(S_n)_{n\in\mathbb{N}_0}$ to eventually become negative is given by
	\[\Prob(S_n<0\textup{ for some }n\in\IN)= \rho^{k+1}\,\bigg(1-\frac{\sigma}{2}\cdot\frac{1-(1-2\rho_\mathrm{odd})^{k+1}}{1-\tau\cdot(1-\tau_\mathrm{odd})}\bigg).
	\]
	The (conditional) probabilities $\sigma$, $\rho$ and $\tau$ as well as $\rho_\mathrm{odd}$ and $\tau_\mathrm{odd}$ appearing here refer to the ones introduced in \eqref{probs} corresponding to the left-continuous random walk on $\IZ$ starting at $0$ with increments distributed as $Y_1$.
\end{corollary}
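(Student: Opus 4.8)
The plan is to reduce the statement to Corollary \ref{generalstart} by passing from the walk $(S_n)$ with separable increments to the finer \emph{half-step} walk built from the summands $Y_i$, and then observing the latter only at even times. Concretely, I would take an i.i.d.\ sequence $(Y_j)_{j\in\IN}$ distributed as $Y_1$ and set $\tilde S_0=k$, $\tilde S_m=k+\sum_{j=1}^m Y_j$. Grouping the summands in consecutive pairs, $X_i:=Y_{2i-1}+Y_{2i}$ yields an i.i.d.\ sequence with $X_i\stackrel{d}{=}Y_1+Y_2\stackrel{d}{=}X_1$, and $S_n:=k+\sum_{i=1}^n X_i=\tilde S_{2n}$. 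Thus $(S_n)_{n\in\IN_0}$ has the prescribed law and is exactly the half-step walk $(\tilde S_m)_{m\in\IN_0}$ sampled at even times, so that $\{S_n<0\text{ for some }n\in\IN\}=\{\tilde S_{2n}<0\text{ for some }n\in\IN\}$, which is precisely the event $E$ of Corollary \ref{generalstart} for the walk $(\tilde S_m)$.

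The key step---and the one place where separability is genuinely used---is to verify that $(\tilde S_m)$ is itself left-continuous, i.e.\ $\Prob(Y_1\geq -1)=1$. Here I would argue as follows. Since $Y_1+Y_2\geq -2$ almost surely and $Y_1,Y_2$ are independent, for any $b\in\supp(Y_1)=\supp(Y_2)$ we get $\Prob(Y_1<-2-b)\,\Prob(Y_2=b)\leq\Prob(Y_1+Y_2<-2)=0$, hence $Y_1\geq -2-b$ a.s.; in particular $Y_1$ is bounded below, so being integer-valued it attains its infimum $m=\inf\supp(Y_1)$ with $\Prob(Y_1=m)>0$. Taking $b=m$ then forces $m\geq -2-m$, i.e.\ $m\geq -1$, whence $Y_1\geq -1$ a.s. Moreover $\IE(Y_1)=\tfrac12\,\IE(X_1)>0$, so the half-step walk has positive drift. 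This is the main obstacle only in the sense that it is the sole nontrivial observation; once it is in place everything is mechanical.

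It remains to address non-monotonicity, which Corollary \ref{generalstart} (through Theorem \ref{skipfree}) requires. If $\Prob(Y_1=-1)=0$, then $Y_1\geq 0$ a.s., hence $X_1\geq 0$ a.s., and the walk started at $k\geq 0$ never becomes negative; the asserted probability is $0$, consistent with the formula since $\rho=0$ in this degenerate case. Otherwise $p_{-1}=\Prob(Y_1=-1)>0$ and $(\tilde S_m)$ is a left-continuous, non-monotonic random walk with positive drift, so Corollary \ref{generalstart} applies verbatim to it with starting value $\tilde S_0=k$. Its expression for $\Prob(E\,|\,\tilde S_0=k)$ is exactly the right-hand side of the claimed identity, with $\rho,\sigma,\tau,\rho_\mathrm{odd},\tau_\mathrm{odd}$ the probabilities attached to the $Y_1$-walk, and the reduction in the first paragraph identifies this quantity with $\Prob(S_n<0\text{ for some }n\in\IN)$, completing the argument.
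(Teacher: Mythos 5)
Your proposal is correct and takes essentially the same route as the paper: pass to the half-step walk $(\tilde S_m)_{m\in\IN_0}$ with increments distributed as $Y_1$, apply Corollary \ref{generalstart} to it, and identify $(\tilde S_{2n})_{n\in\IN_0}\stackrel{d}{=}(S_n)_{n\in\IN_0}$. The details you add beyond the paper's proof --- that separability plus $\Prob(X_1\geq-2)=1$ forces $Y_1\geq-1$ a.s.\ via the attained infimum of $\supp(Y_1)$, and the degenerate monotone case $\Prob(Y_1=-1)=0$ --- are verifications the paper leaves implicit, not a different approach.
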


\begin{proof}
	This result readily follows from Corollary \ref{generalstart}: Define the random walk $(\tilde{S}_n)_{n\in\IN_0}$ by $\tilde{S}_0=k$ and $\tilde{S}_n=\tilde{S}_{n-1}+Y_n$, $n\in\IN$, where $(Y_n)_{n\in\IN}$ is an i.i.d.\ sequence of copies of $Y_1$. Then $(\tilde{S}_n)_{n\in\IN_0}$ is a left-continuous, non-monotonic random walk on $\IZ$ with positive drift, to which Corollary \ref{generalstart} applies. The final observation needed to conclude is that
	$(\tilde{S}_{2n})_{n\in\IN_0}\stackrel{d}{=}(S_n)_{n\in\IN_0}$.
\end{proof}

\begin{remark}
	In the same way, one could treat a random walk on $\IZ$ with increment distribution being the convolution
	of $k\geq 3$ i.i.d.\ $\{-1\}\cup\IN_0$-valued random variables by introducing $k-1$ intermediate steps and analyze
	$(\tilde{S}_{kn})_{n\in\IN_0}$ instead. Lemma \ref{calc} and Theorem \ref{skipfree} still apply to the skip-free walk
	$(\tilde{S}_n)_{n\in\IN_0}$, however, the Markov chain and corresponding matrix calculations used in the proof
	of Theorem \ref{even} would become a lot harder to handle resp.\ to perform.
\end{remark}

\begin{example}
	Consider the random walk $(S_n)_{n\in\IN_0}$ with increments $X_n=S_n-S_{n-1}$, where $(X_n)_{n\in\IN}$ is an i.i.d.\ sequence and $X_n+2$ Poisson distributed with parameter $\lambda>2$. Then $X_n$ can be written as the sum of two independent	$\mathrm{Poi}(\frac\lambda2)$ random variables shifted to the left by 1.
	The auxiliary random walk (with intermediate steps) is therefore left-continuous and has increments as in the example at the end of Section \ref{sec3}. Choosing $\lambda=3$ for instance, we can therefore conclude by Corollary \ref{asf} for arbitrary $k\in\IN_0$:
	\[\Prob(S_n<0\textup{ for some }n\in\IN\,|\,S_0=k)\approx 0.417^{k+1}\,\bigg(1-\frac{0.156\cdot\big(1-(-0.413)^{k+1}\big)}{0.915}\bigg).\]
	Given a start at e.g.\ $0$ or $2$ this amounts to $\Prob(S_n<0\textup{ for some }n\in\IN\,|\,S_0=0)\approx0.317$ and $\Prob(S_n<0\textup{ for some }n\in\IN\,|\,S_0=2)\approx0.059$ respectively.
\end{example}


\vspace{0.5cm}\noindent
	{\sc \small 
	Timo Vilkas\\
	Statistiska institutionen,\\
	Ekonomihögskolan vid Lunds universitet,\\
	220\,07 Lund, Sweden.}\\
	timo.vilkas@stat.lu.se\\

\end{document}